\newcommand{\mbP}{\mathbb P}
\newcommand{\mbZ}{\mathbb Z}
\newcommand{\mbC}{\mathbb C}
\newcommand{\oM}{\overline{\mathcal M}}
\def\oM{{\overline{\mathcal{M}}}}
\def\mbQ{{\mathbb Q}}
\def\d{{\partial}}
\newcommand{\<}{\left<}
\renewcommand{\>}{\right>}
\newcommand{\eps}{\varepsilon}
\newcommand{\mcF}{\mathcal{F}}
\newcommand{\mcP}{\mathcal{P}}
\newcommand{\vir}{\mathrm{vir}}
\newcommand{\ev}{\mathrm{ev}}
\newcommand{\oD}{\overline{D}}
\newcommand{\tL}{\widetilde{L}}
\newcommand{\od}{\overline{d}}
\newtheorem{theorem}{Theorem}[section]
\newtheorem{lemma}[theorem]{Lemma}
\theoremstyle{definition}
\newtheorem{example}[theorem]{Example}
\newtheorem{remark}[theorem]{Remark}
\numberwithin{equation}{section}
\begin{document}

\title[A formula for the Gromov--Witten potential of an elliptic curve]{A formula for the Gromov--Witten potential of an elliptic curve}

\author{Alexandr Buryak}
\address{A. Buryak:\newline 
Faculty of Mathematics, National Research University Higher School of Economics, \newline
6 Usacheva str., Moscow, 119048, Russian Federation;\smallskip\newline 
Center for Advanced Studies, Skolkovo Institute of Science and Technology, \newline
1 Nobel str., Moscow, 143026, Russian Federation}
\email{aburyak@hse.ru}

\begin{abstract}
An algorithm to determine all the Gromov--Witten invariants of any smooth projective curve was obtained by Okounkov and Pandharipande in 2006. They identified stationary invariants with certain Hurwitz numbers and then presented Virasoro type constraints that allow to determine all the other Gromov--Witten invariants in terms of the stationary ones. In the case of an elliptic curve, we show that these Virasoro type constraints can be explicitly solved leading to a very explicit formula for the full Gromov--Witten potential in terms of the stationary invariants. 
\end{abstract}

\date{\today}

\maketitle

\section{Introduction}

The \emph{Gromov--Witten invariants} of a smooth projective variety $X$ are integrals over the moduli space of maps from algebraic curves to $X$. These invariants are a very rich object of research, where various branches of mathematics, including algebraic geometry, mathematical physics, topology, and combinatorics, interact in a beautiful manner. If $X$ is a point, then the moduli spaces of maps specialize to the moduli spaces $\oM_{g,n}$ of stable algebraic curves of genus $g$ with~$n$ marked points, and the Gromov--Witten invariants, often called the \emph{intersection numbers} in this case, are the integrals over $\oM_{g,n}$ of monomials in the first Chern classes of tautological line bundles over $\oM_{g,n}$. The exponent of the generatings series of intersection numbers, called now the \emph{Kontsevich--Witten (KW) tau-function}, was first described by Kontsevich~\cite{Kon92} as a certain tau-function of the KdV hierarchy (this was conjectured before by Witten~\cite{Wit91}), and Kontsevich also derived a beautiful matrix model for it. A more detailed description of the associated tau-function of the KdV hierarchy using the Sato Grassmannian can be found in~\cite{KS91} (see also~\cite{BJP15}). Virasoro constraints for the KW tau-function were derived in~\cite{DVV91} (see also~\cite{Wit92}). As a result of a huge amount of research during more than 30 years, there exist now a lot of ways to describe the KW tau-function, see, e.g., \cite{Oko02,Ale11,BDY16,Bur17,MM20}.

\medskip 

In the case when the target variety is a smooth projective curve, the Gromov--Witten invariants are also well studied. The Gromov--Witten invariants of the complex projective line $\mbP^1$ were first descibed in~\cite{GP99} using localization. Virasoro constraints were derived in~\cite{Giv01}, a relation with Hurwitz numbers was obtained in~\cite{OP06c}, and integrable systems controlling the Gromov--Witten invariants were presented in~\cite{DZ04,OP06b}. Regarding other descriptions, see, e.g., \cite{DMNPS17,DYZ20,BR21}.

\medskip

The Gromov--Witten invariants of target curves of genus $h\ge 1$ were computed in~\cite{OP06a}. The stationary invariants are identified with Hurwitz numbers counting ramified coverings of a Riemann surface of genus $h$ that are branched over some number of fixed points with the ramification profiles given by the so-called \emph{completed cycles}. All the other Gromov--Witten invariants are determined starting from the stationary ones using Virasoro type constraints. For further results on the Gromov--Witten invariants of higher genus target curves, we refer a reader, e.g., to~\cite{Ros08,Zho20}.

\medskip

However, there is a certain gap in the understanding of the Gromov--Witten invariants of higher genus target curves. In~\cite[Section 0.1.6]{OP06b} the authors say the following: ``\emph{We do not know whether the Gromov--Witten theories of higher genus target curves are governed by integrable hierarchies}''. As far as we know, this aspect was never clarified in the literature, although it is strongly believed that the Gromov--Witten invariants of any target variety are controlled by an appropriate integrable system, and this is confirmed in a large class of cases (see, e.g.,~\cite{BPS12}).

\medskip

In this note, we focus on the case of an elliptic curve as a target variety. We solve the system of Virasoro type constraints given in~\cite{OP06a} and obtain an explicit formula (Theorem~\ref{main theorem}) for the full Gromov--Witten potential of the elliptic curve in terms of the stationary Gromov--Witten invariants. As a corollary, we show that the Gromov--Witten invariants of the elliptic curve are controlled by an integrable hierarchy, which is related to a simple dispersionless hierarchy by an explicit Miura transformation (Theorem~\ref{second theorem}). This clarifies the aspect of the Gromov--Witten theory of the elliptic curve pointed out by the authors of~\cite{OP06b}.

\medskip

\subsection*{Notation and conventions}
 
\begin{itemize}
\item We use the Einstein summation convention for repeated upper and lower Greek indices. 

\smallskip

\item When it does not lead to a confusion, we use the symbol $*$ to indicate any value, in the appropriate range, of a sub- or superscript.

\smallskip

\item For a topological space $X$, we denote by $H_*(X)$ and $H^*(X)$, respectively, the homology and cohomology groups of $X$ with coefficients in $\mbC$.
\end{itemize}

\medskip

\subsection*{Acknowledgements}

The author has been funded within the framework of the HSE University Basic Research Program.
 
\medskip


\section{The Gromov--Witten potential of an elliptic curve}

Here, in Theorem~\ref{main theorem} we present an explicit formula for the full Gromov--Witten potential of an elliptic curve. Before that, let us recall very briefly basic facts about the Gromov--Witten invariants of an elliptic curve, referring a reader to~\cite{OP06a} for further details.

\medskip

Consider an elliptic curve $E$. Let us choose a basis $\gamma_1,\gamma_2,\gamma_3,\gamma_4\in H^*(E)$, where the class $\gamma_1\in\ H^0(E)$ is the unit, the classes $\gamma_2\in H^{1,0}(E)$ and $\gamma_3\in H^{0,1}(E)$ satisfy $\int_E\gamma_2\gamma_3=1$, and the class $\gamma_4\in H^2(E)$ is the Poincar\'e dual to a point. 

\medskip

The moduli space $\oM_{g,n}(E,d)$ parameterizes connected, genus $g$, $n$-pointed stable maps $f\colon (C;x_1,\ldots,x_n)\to E$ with $f_*[C]=d[E]\in H_2(E,\mbZ)$. For each $1\le i\le n$, there is a complex line bundle $L_i$ over $\oM_{g,n}(E,d)$ given by the cotangent spaces to the $i$-th marked point in $C$, and we denote $\psi_i:=c_1(L_i)\in H^2(\oM_{g,n}(E,d))$. For $1\le i\le n$, there is a map $\ev_i\colon\oM_{g,n}(E,d)\to E$ sending a stable map $f\colon (C;x_1,\ldots,x_n)\to E$ to the point $f(x_i)\in E$. The moduli space $\oM_{g,n}(E,d)$ is endowed with a \emph{virtual fundamental class} $[\oM_{g,n}(E,d)]^\vir\in H_{2(2g-2+n)}(\oM_{g,n}(E,d),\mbZ)$.

\medskip

The \emph{Gromov--Witten invariants} of $E$ are the following integrals:
\begin{gather}\label{eq:GW invariants}
\left<\tau_{k_1}(\gamma_{\alpha_1})\ldots\tau_{k_n}(\gamma_{\alpha_n})\right>^E_{g,d}:=\int_{\left[\oM_{g,n}(E,d)\right]^\vir}\psi_1^{k_1}\ev_1^*(\gamma_{\alpha_1})\ldots\psi_n^{k_n}\ev_n^*(\gamma_{\alpha_n}),
\end{gather}
where $k_1,\ldots,k_n,d\ge 0$ and $1\le\alpha_1,\ldots,\alpha_n\le 4$. The Gromov--Witten invariants with $\alpha_1=\ldots=\alpha_n=4$ are called \emph{stationary}. The Gromov--Witten invariant~\eqref{eq:GW invariants} is zero unless 
\begin{gather}\label{eq:dimension constraint}
\sum_{i=1}^n(k_i+q_{\alpha_i}-1)=2g-2,
\end{gather}
where $q_\alpha:=\frac{1}{2}\deg\gamma_\alpha$. If the subscript $g$ is omitted in the bracket notation $\left<\prod_i \tau_{k_1}(\gamma_{\alpha_i})\right>^E_{d}$, the genus is specified by the constraint~\eqref{eq:dimension constraint}. If the resulting genus is not an integer, the Gromov--Witten invariant is defined as vanishing.

\medskip

Note that in genus $0$ the Gromov--Witten invariant~\eqref{eq:GW invariants} is zero unless $d=0$. In the case $g=d=0$, the only nontrivial Gromov--Witten invariants (up to simultaneous permutations of the numbers $\alpha_1,\ldots,\alpha_n$ and the numbers $k_1,\ldots,k_n$) are
\begin{gather}\label{eq:correlators in genus 0}
\left<\tau_{k_1}(\gamma_4)\prod_{i=2}^n\tau_{k_i}(\gamma_1)\right>^E_{0,0}=\left<\tau_{k_1}(\gamma_2)\tau_{k_2}(\gamma_3)\prod_{i=3}^n\tau_{k_i}(\gamma_1)\right>^E_{0,0}=\frac{(n-3)!}{k_1!\ldots k_n!},\quad k_1+\ldots+k_n=n-3.
\end{gather}

\medskip

We introduce formal variables $\eps,q$ and a two-parameter family of formal variables $t^\alpha_d, 1\le\alpha\le 4$, $d\ge 0$, where the variables $t^2_d,t^3_d$ are odd, and the variables $e,q,t^1_d,t^4_d$ are even, and consider the Gromov--Witten potential of $E$:
\begin{gather*}
\mcF(t^*_*,q,\eps)=\sum_{g\ge 0}\eps^{2g}\mcF_g(t^*_*,q):=\sum\frac{\eps^{2g}}{n!}q^d t^{\alpha_1}_{d_1}\ldots t^{\alpha_n}_{d_n}\left<\tau_{d_1}(\gamma_{\alpha_1})\ldots\tau_{d_n}(\gamma_{\alpha_n})\right>^E_{g,d}.
\end{gather*}

\medskip

\begin{remark}
The Gromov--Witten potential of $E$ (and, more generally, of any target variety with nonvanishing odd cohomology) is often defined a little bit differently, where the monomial in front of the Gromov--Witten invariant $\left<\tau_{d_1}(\gamma_{\alpha_1})\ldots\tau_{d_n}(\gamma_{\alpha_n})\right>^E_{g,d}$ is replaced by $t^{\alpha_n}_{d_n}\ldots t^{\alpha_1}_{d_1}$. We follow the convention from the paper~\cite{OP06a}.
\end{remark}

\medskip

We introduce a $4\times 4$ matrix $\eta=(\eta_{\alpha\beta})$ by $\eta_{\alpha\beta}:=\int_E\gamma_\alpha\gamma_\beta$, and denote by $\eta^{\alpha\beta}$ the entries of the matrix $\eta^{-1}$, $\eta^{\alpha\mu}\eta_{\mu\beta}=\delta^\alpha_\beta$. We also introduce formal power series
$$
v^\alpha:=\eta^{\alpha\mu}\frac{\d^2\mcF_0}{\d t^\mu_0\d t^1_0},\qquad v^\alpha_k:=\frac{\d^k v^\alpha}{(\d t^1_0)^k},\qquad 1\le\alpha\le 4,\quad k\ge 0.
$$

\medskip

For any $n\ge 0$ and an $n$-tuple $\od=(d_1,\ldots,d_n)\in\mbZ_{\ge 0}^n$, denote
$$
C_{\od}(q):=\sum_{d\ge 0}\left<\prod_{i=1}^{n}\tau_{d_i}(\gamma_4)\right>^E_d q^d\in\mbC[[q]].
$$
For example~\cite[Section~5]{OP06c},
\begin{gather*}
C_{()}(q)=\sum_{n\ge 1}\frac{\sigma(n)}{n}q^n,\qquad C_{(2)}(q)=\frac{E_2^2}{2}+\frac{E_4}{12},\qquad C_{(1,1)}(q)=-\frac{8}{3} E_2^3 + \frac{2}{3} E_2E_4 + \frac{7}{180} E_6,
\end{gather*}
where $\sigma(n):=\sum_{d\mid n}d$ and
$$
E_k(q):=\frac{\zeta(1-k)}{2}+\sum_{n\ge 1}\left(\sum_{d\mid n}d^{k-1}\right)q^n,\quad k=2,4,6,
$$
is the standard notation for the \emph{Eisenstein series}. Note that if $d_i=0$ for some $i$, then
$$
C_{\od}(q)=q\frac{d}{d q}C_{(d_1,\ldots,\widehat{d_i},\ldots,d_n)}(q)-\frac{\delta_{n,1}}{24}.
$$
In~\cite{OP06c}, the authors derived a formula for the series $C_{\od}(q)$, which we recall in the appendix to our paper.

\medskip

For an integer $d\ge 0$, denote by $\mcP_d$ the set of all partitions of $d$. We denote by $l(\lambda)$ the length of $\lambda\in\mcP_d$ and denote $m_j(\lambda):=|\{1\le i\le l(\lambda)|\lambda_i=j\}|$, $j\ge 1$.

\medskip

\begin{theorem}\label{main theorem}
For any $g\ge 1$ we have
\begin{gather}\label{eq:main formula}
\mcF_g=\sum_{\lambda\in\mcP_{2g-2}}\frac{\prod_{i=1}^{l(\lambda)}v^4_{\lambda_i}}{\prod_{j\ge 1}m_j(\lambda)!}C_\lambda(q e^{v^4})-\frac{v^4}{24}\delta_{g,1}.
\end{gather}
\end{theorem}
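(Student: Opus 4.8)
The plan is to invoke the reconstruction theorem of~\cite{OP06a}: the stationary series $C_\lambda$ are the free input, while all remaining invariants are produced from them by a triangular system of constraints — the string equation, the dilaton equation, the divisor equation for $\gamma_4$, and the Virasoro-type constraints that eliminate the descendant insertions of $\gamma_1,\gamma_2,\gamma_3$. Consequently any series that (i) reproduces the stationary invariants and (ii) satisfies all of these constraints must coincide with $\mcF_g$. Writing $\mcR_g$ for the right-hand side of~\eqref{eq:main formula}, the whole argument reduces to checking (i) and (ii) for $\mcR_g$.

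First I would record the genus-$0$ identities for the fields $v^4_k=\partial^{k+2}\mcF_0/(\partial t^1_0)^{k+2}$. Since $\mcF_0$ is independent of $q$ (the genus-$0$ invariants vanish for $d>0$), the genus-$0$ divisor and string equations close up into relations such as $\partial_{t^4_0}v^4=1+\sum_{d\ge 0}t^1_{d+1}\partial_{t^4_d}v^4$ and its string analogue. These also give the leading behaviour $v^4_k=t^4_k+(\text{higher order})$, which is the mechanism identifying each factor $v^4_{\lambda_i}$ with a source for the stationary descendant $\tau_{\lambda_i}(\gamma_4)$.

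For step (i) I would differentiate $\mcR_g$ in the variables $t^4_{d_i}$ and set $t=0$; the leading behaviour $v^4_k=t^4_k+\ldots$ together with the stationary divisor identity $C_{\od}=q\frac{d}{dq}C_{(\ldots\widehat{d_i}\ldots)}-\frac{\delta_{n,1}}{24}$ recalled in the excerpt yields exactly $C_{(d_1,\dots,d_n)}$. For step (ii) each constraint is transported to a statement about the $v^4$-jets via the genus-$0$ identities and then reduced to an identity among the $C_\lambda$. The divisor equation is the organising case: the substitution $q\mapsto qe^{v^4}$ is engineered precisely so that $q\partial_q\mcR_g$ absorbs the $\tau_0(\gamma_4)$-insertion, again through the stationary divisor identity; that same identity forces the correction $-\frac{v^4}{24}\delta_{g,1}$, and in genus $1$ it collapses~\eqref{eq:main formula}, up to the explicit term $\frac{1}{24}\log q$, to minus the logarithm of the Dedekind eta function evaluated at $qe^{v^4}$, making the correction transparent. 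The string and dilaton equations follow in the same way from their genus-$0$ counterparts, since the $t$-dependence of $C_\lambda(qe^{v^4})$ and of the factors $v^4_{\lambda_i}$ is entirely governed by the single field $v^4$.

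The main obstacle is the remaining part of (ii): the Virasoro-type constraints removing the odd descendants $\tau_k(\gamma_2),\tau_k(\gamma_3)$ and the higher unit descendants $\tau_k(\gamma_1)$ with $k\ge 2$. The dependence of $\mcR_g$ on the odd and higher times enters solely through $v^4$, and showing that this collapse is consistent with the full Virasoro action requires combining the detailed genus-$0$ behaviour of $v^4$ in these variables with nontrivial combinatorial identities for the $C_\lambda$, ultimately the quasimodularity encoded in the \cite{OP06c} formula recalled in the appendix. I expect this combinatorial reduction, matching the Virasoro action against the partition sum $\sum_\lambda\frac{\prod v^4_{\lambda_i}}{\prod_j m_j(\lambda)!}$, to be the crux of the argument.
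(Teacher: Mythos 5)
Your overall strategy coincides with the paper's: the constraints of~\cite{OP06a} together with the divisor equation determine $\mcF_g$ uniquely from its restriction to $t^1_*=t^2_*=t^3_*=t^4_0=0$, the right-hand side of~\eqref{eq:main formula} visibly has the correct restriction (your step (i) is essentially Remark~\ref{remark:about main formula}), and it remains to verify that the right-hand side satisfies all the constraints. The genuine gap is in step (ii), exactly at the point you flag as the crux. You propose to verify the Virasoro-type constraints by ``matching the Virasoro action against the partition sum'' through ``nontrivial combinatorial identities for the $C_\lambda$, ultimately the quasimodularity'' of the~\cite{OP06c} formula. No such identities exist in, or are needed for, the argument; left in this form the verification is not carried out, and the route you indicate does not obviously close.

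The missing observation is that every constraint operator --- $\tL_k$, $D_k$, $\oD_k$, and the divisor operator $O$ --- is a first-order derivation (a linear combination of the $\frac{\d}{\d t^\alpha_a}$, plus $q\frac{\d}{\d q}$ in the case of $O$), while the right-hand side of~\eqref{eq:main formula} depends on $t^*_*$ and $q$ only through the building blocks $v^4_d$, $d\ge 0$, and $q$. Hence it suffices to compute the action of each operator on these building blocks. Applying $\frac{\d^{d+2}}{(\d t^1_0)^{d+2}}$ to the genus-zero constraints~\eqref{eq:three Virasoro constraints-2} and~\eqref{eq:divisor with operators-2}, and using that $\frac{\d}{\d t^1_0}$ commutes with all the operators, gives $\tL_k v^4_d=D_k v^4_d=\oD_k v^4_d=0$ and $O v^4_d=\delta_{d,0}$. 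Combined with $Oq=-q$ this yields $O(qe^{v^4})=0$, so every $C_\lambda(qe^{v^4})$ and every factor $v^4_{\lambda_i}$ (with $\lambda_i\ge 1$) is annihilated by all the operators, and the only surviving contribution is $O\bigl(-\tfrac{v^4}{24}\delta_{g,1}\bigr)=-\tfrac{\delta_{g,1}}{24}$, as required. In particular the specific values of the $C_\lambda$, their quasimodularity, and the stationary divisor identity $C_{\od}=q\frac{d}{dq}C_{(d_1,\ldots,\widehat{d_i},\ldots,d_n)}-\frac{\delta_{n,1}}{24}$ play no role in the verification; the latter identity is a consequence of the theorem rather than an input to its proof.
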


\medskip

\begin{example}
We have
\begin{gather*}
\mcF_1=C_{()}(q e^{v^4})-\frac{v^4}{24},\qquad \mcF_2=v^4_2 C_{(2)}(qe^{v^4})+\frac{(v^4_1)^2}{2}C_{(1,1)}(qe^{v^4}).
\end{gather*}
\end{example}

\medskip

\begin{remark}\label{remark:about main formula}
By~\eqref{eq:correlators in genus 0}, $v^4_k|_{t^1_*=t^2_*=t^3_*=0}=t^4_k$, so formula~\eqref{eq:main formula} is obviously true if we substitute $t^1_*=t^2_*=t^3_*=t^4_0=0$, because after this substitution the right-hand side of~\eqref{eq:main formula} becomes 
$$
\sum_{n\ge 0}\frac{1}{n!}\sum_{\substack{k_1,\ldots,k_n\ge 1\\\sum k_i=2g-2}}\sum_{d\ge 0}q^d\left<\prod_{i=1}^n\tau_{k_i}(\gamma_4)\right>^E_{g,d}\prod_{i=1}^n t^4_{k_i}.
$$
So formula~\eqref{eq:main formula} doesn't tell anything about the Gromov--Witten invariants
$$
\left<\prod_{i=1}^n\tau_{k_i}(\gamma_4)\right>^E_{g,d},\quad k_1,\ldots,k_n\ge 1,
$$
and should be understood as a way to reconstruct all the other Gromov--Witten invariants starting from them. 
\end{remark}

\medskip

\begin{proof}[Proof of Theorem~\ref{main theorem}]

The divisor equation in Gromov--Witten theory~\cite{KM94} implies that
\begin{gather}\label{eq:divisor with operators}
\frac{\d\mcF_g}{\d t^4_0}=q\frac{\d\mcF_g}{\d q}+\sum_{n\ge 0}t^1_{n+1}\frac{\d\mcF_g}{\d t^4_n}+\delta_{g,0}\frac{(t^1_0)^2}{2}-\delta_{g,1}\frac{1}{24}.
\end{gather}
For $a\in\mbC$ and $b\in\mbZ_{\ge 0}$ let
$$
(a)_b:=
\begin{cases}
1,&\text{if $b=0$},\\
a(a+1)\ldots(a+b-1),&\text{if $b\ge 1$}.
\end{cases}
$$
Denote also $b_1=b_3:=0$ and $b_2=b_4:=1$. In~\cite{OP06a} the authors derived the following family of constraints for the potential $\mcF$:
\begin{gather}\label{eq:three Virasoro constraints}
L_k\exp(\mcF)=D_k\exp(\mcF)=\oD_k\exp(\mcF)=0,\quad k\ge -1,
\end{gather}
where
\begin{align*}
&L_k:=-(k+1)!\frac{\d}{\d t^1_{k+1}}+\sum_{m\ge 0}(b_\alpha+m)_{k+1}t^\alpha_m\frac{\d}{\d t^\alpha_{m+k}}+\delta_{k,-1}\eta_{\alpha\beta}\frac{t^\alpha_0 t^\beta_0}{2}, && k\ge -1,\\
&D_k:=-(k+1)!\frac{\d}{\d t^2_{k+1}}+\sum_{m\ge 0}\left((m)_{k+1}t^1_m\frac{\d}{\d t^2_{m+k}}+(m+1)_{k+1}t^3_m\frac{\d}{\d t^4_{m+k}}\right), && k\ge -1,\\
&\oD_k:=-(k+1)!\frac{\d}{\d t^3_{k+1}}+\sum_{m\ge 0}\left((m)_{k+1}t^1_m\frac{\d}{\d t^3_{m+k}}-(m+1)_{k+1}t^2_m\frac{\d}{\d t^4_{m+k}}\right), && k\ge -1.
\end{align*}

\medskip

Introducing the operator $O:=\frac{\d}{\d t^4_0}-q\frac{\d}{\d q}-\sum_{n\ge 0}t^1_{n+1}\frac{\d}{\d t^4_n}$, we rewrite the constraint~\eqref{eq:divisor with operators} as
\begin{gather}\label{eq:divisor with operators-2}
O\mcF_g=\delta_{g,0}\frac{(t^1_0)^2}{2}-\frac{\delta_{g,1}}{24}.
\end{gather}
Note also that the constraints~\eqref{eq:three Virasoro constraints} can be equivalently written as
\begin{gather}\label{eq:three Virasoro constraints-2}
\tL_k\mcF_g=-\delta_{k,-1}\delta_{g,0}\eta_{\alpha\beta}\frac{t^\alpha_0 t^\beta_0}{2},\qquad D_k\mcF_g=\oD_k\mcF_g=0,\quad k\ge -1,
\end{gather}
where $\tL_k:=L_k-\delta_{k,-1}\eta_{\alpha\beta}\frac{t^\alpha_0 t^\beta_0}{2}$.

\medskip

Clearly, the constraints~\eqref{eq:divisor with operators-2} and~\eqref{eq:three Virasoro constraints-2} uniquely determine the potential $\mcF_g$ starting from the part $\mcF_g|_{t^1_*=t^2_*=t^3_*=t^4_0=0}$. 
Since in Remark~\ref{remark:about main formula} we explained that equation~\eqref{eq:main formula} is true if we substitute $t^1_*=t^2_*=t^3_*=t^4_0=0$, it remains to prove that following lemma.

\medskip

\begin{lemma}
{\ }
\begin{itemize}
\item[1.] The operators $\tL_k$, $D_k$, and $\oD_k$, $k\ge -1$, annihilate the right-hand side of~\eqref{eq:main formula}. 

\smallskip

\item[2.] Applying the operator $O$ to the right-hand side of~\eqref{eq:main formula} gives $-\frac{\delta_{g,1}}{24}$.
\end{itemize}
\end{lemma}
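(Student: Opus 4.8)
The plan is to exploit the fact that the right-hand side $R_g$ of~\eqref{eq:main formula} depends on the variables $t^*_*$ only through the series $v^4_j$, $j\ge 0$, and depends on $q$ only explicitly, through the arguments $qe^{v^4}$ of the factors $C_\lambda$. Writing $R_g=G(v^4_0,v^4_1,\ldots;q)$, where $G(w_0,w_1,\ldots;q)$ denotes the expression obtained by treating the $v^4_j$ as independent arguments $w_j$, any derivation $X$ in the $t^*_*$ acts by the chain rule as $X R_g=\sum_{j\ge 0}(X v^4_j)\,\frac{\d G}{\d w_j}$ (evaluated at $w_\bullet=v^4_\bullet$). Both parts of the lemma therefore reduce to computing $X v^4_j$ for $X\in\{\tL_k,D_k,\oD_k\}$ and for the derivation $O+q\frac{\d}{\d q}$.

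The single structural input I would record first is that genus-zero invariants vanish unless $d=0$, so $\mcF_0$ is independent of $q$; moreover the Poincar\'e pairing gives $\eta^{4\mu}=\delta^\mu_1$, so $v^4=\frac{\d^2\mcF_0}{(\d t^1_0)^2}$ and hence $v^4_j=\frac{\d^{\,j+2}\mcF_0}{(\d t^1_0)^{\,j+2}}$ for all $j\ge0$. The key computation is then that each operator $\tL_k$, $D_k$, $\oD_k$, and $O+q\frac{\d}{\d q}$ commutes with $\frac{\d}{\d t^1_0}$ for every $k\ge-1$: in each case the only candidate commutator term carries either the Pochhammer factor $(b_1+0)_{k+1}=(0)_{k+1}$ (which vanishes for $k\ge0$), or a Kronecker delta $\delta_{0,n+1}$, or a derivative in the out-of-range variable $t^\bullet_{-1}$ (for $k=-1$), and so it is always zero. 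Consequently $X v^4_j=\frac{\d^{\,j}}{(\d t^1_0)^{\,j}}(Xv^4)=\frac{\d^{\,j+2}}{(\d t^1_0)^{\,j+2}}(X\mcF_0)$.

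At this point I would substitute the genus-zero constraints contained in~\eqref{eq:three Virasoro constraints-2} and~\eqref{eq:divisor with operators-2}, namely $\tL_k\mcF_0=-\delta_{k,-1}\eta_{\alpha\beta}\frac{t^\alpha_0 t^\beta_0}{2}$, $D_k\mcF_0=\oD_k\mcF_0=0$, and $(O+q\frac{\d}{\d q})\mcF_0=\frac{(t^1_0)^2}{2}$ (the last using $q\frac{\d}{\d q}\mcF_0=0$). Applying $\frac{\d^{\,j+2}}{(\d t^1_0)^{\,j+2}}$ annihilates the first expression (it is linear in $t^1_0$) and the second, and sends the third to $\delta_{j,0}$. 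Hence $\tL_k v^4_j=D_k v^4_j=\oD_k v^4_j=0$ and $(O+q\frac{\d}{\d q})v^4_j=\delta_{j,0}$ for all admissible $j,k$. Part~1 is then immediate from the chain-rule formula, every summand carrying a factor $X v^4_j=0$, so that the odd parity of $D_k,\oD_k$ plays no role. For part~2 I would write $O=(O+q\frac{\d}{\d q})-q\frac{\d}{\d q}$: the first operator contributes $\sum_j\delta_{j,0}\frac{\d G}{\d w_0}=\frac{\d G}{\d w_0}$, while $-q\frac{\d}{\d q}$ acts only on the explicit $q$ (since each $v^4_j$ is $q$-independent) and contributes $-q\frac{\d G}{\d q}$; because $w_0=v^4$ enters $G$ only through the arguments $qe^{v^4}$ and through the term $-\frac{v^4}{24}\delta_{g,1}$, one checks $\frac{\d G}{\d w_0}=q\frac{\d G}{\d q}-\frac{\delta_{g,1}}{24}$, whence $O R_g=-\frac{\delta_{g,1}}{24}$.

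The one step deserving genuine care, and the main obstacle, is the uniform verification that all four operators commute with $\frac{\d}{\d t^1_0}$ across the whole range $k\ge-1$, in particular reconciling the boundary case $k=-1$ (where the naive commutator would involve the nonexistent index $-1$) with the generic case $k\ge0$ (where it is killed by the Pochhammer factor $(0)_{k+1}$). Once this commutation is established, the remainder is the routine bookkeeping sketched above, and the two asserted items follow.
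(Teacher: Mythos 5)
Your proposal is correct and follows essentially the same route as the paper: both reduce the lemma to the identities $\tL_k v^4_d=D_k v^4_d=\oD_k v^4_d=0$ and $Ov^4_d=\delta_{d,0}$, obtained by applying $\frac{\d^{d+2}}{(\d t^1_0)^{d+2}}$ to the genus-zero constraints~\eqref{eq:three Virasoro constraints-2} and~\eqref{eq:divisor with operators-2} and using that all these operators commute with $\frac{\d}{\d t^1_0}$. You merely spell out in more detail the steps the paper leaves implicit (the commutator verification, $\eta^{4\mu}=\delta^\mu_1$, and the bookkeeping of the explicit $q$-dependence through $qe^{v^4}$ via $Oq=-q$), all of which check out.
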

\begin{proof}
1. Since the operators $\tL_k$, $D_k$, and $\oD_k$ are linear combinations of the operators $\frac{\d}{\d t^\alpha_a}$, it is sufficient to check that they annihilate the formal power series $v^4_d$ for all $d\ge 0$. This is obtained by applying the operator~$\frac{\d^{d+2}}{(\d t^1_0)^{d+2}}$ to the equations in~\eqref{eq:three Virasoro constraints-2} with $g=0$ and using that the operator~$\frac{\d}{\d t^1_0}$ commutes with the operators $\tL_k$, $D_k$, and $\oD_k$.

\medskip

2. Since the operator $O$ is a linear combination of the operators $\frac{\d}{\d t^\alpha_a}$ and $\frac{\d}{\d q}$, and $O q=-q$, it is sufficient to check that $O v^4_d=\delta_{d,0}$. For this, as in Part 1, we apply $\frac{\d^{d+2}}{(\d t^1_0)^{d+2}}$ to both sides of equation~\eqref{eq:divisor with operators-2} with $g=0$ and use that $[\frac{\d}{\d t^1_0},O]=0$.
\end{proof}
\end{proof}

\medskip

\section{Integrable systems associated to an elliptic curve}

In this section, we determine an integrable system controlling the Gromov--Witten invariants of an elliptic curve.

\medskip 

The~\emph{topological recursion relations} in genus~$0$ (see, e.g.,~\cite{KM98}) are the following PDEs for the generating series of genus $0$ Gromov--Witten invariants of an elliptic curve $E$:
\begin{gather*}
\frac{\d^3\mcF_0}{\d t^{\alpha_1}_{d_1+1}\d t^{\alpha_2}_{d_2}\d t^{\alpha_3}_{d_3}}=
\eta^{\nu\mu}\frac{\d^2\mcF_0}{\d t^{\alpha_1}_{d_1}\d t^{\mu}_0}\frac{\d^3\mcF_0}{\d t^\nu_0\d t^{\alpha_2}_{d_2}\d t^{\alpha_3}_{d_3}},\quad 1\le\alpha_1,\alpha_2,\alpha_3\le 4,\quad d_1,d_2,d_3\ge 0.
\end{gather*}
It implies that (see, e.g., \cite[Proposition~3]{BPS12})
$$
\frac{\d^2\mcF_0}{\d t^\alpha_a\d t^\beta_b}=\left.\Omega_{\alpha,a;\beta,b}\right|_{t^\gamma_0=v^\gamma},\quad\text{where}\quad \Omega_{\alpha,a;\beta,b}=\Omega_{\alpha,a;\beta,b}(t^*_0):=\left.\frac{\d^2\mcF_0}{\d t^\alpha_a\d t^\beta_b}\right|_{t^*_{\ge 1}=0}.
$$
We then denote 
$$
P^\alpha_{\beta,b}:=\eta^{\alpha\mu}\frac{\d^2\mcF_0}{\d t^\beta_b\d t^\mu_0}
$$
and using~\eqref{eq:correlators in genus 0} compute
\begin{align*}
&P^1_{1,b}=\frac{(v^1)^{b+1}}{(b+1)!}, && P^1_{2,b}=0, && P^1_{3,b}=0, && P^1_{4,b}=0,\\
&P^2_{1,b}=v^2\frac{(v^1)^b}{b!}, && P^2_{2,b}=\frac{(v^1)^{b+1}}{(b+1)!}, && P^2_{3,b}=0, && P^2_{4,b}=0,\\
&P^3_{1,b}=v^3\frac{(v^1)^b}{b!}, && P^3_{2,b}=0, && P^3_{3,b}=\frac{(v^1)^{b+1}}{(b+1)!}, && P^3_{4,b}=0,\\
&P^4_{1,b}=v^4\frac{(v^1)^b}{b!}+v^2 v^3\frac{(v^1)^{b-1}}{(b-1)!}, && P^4_{2,b}=v^3\frac{(v^1)^b}{b!}, && P^4_{3,b}=-v^2\frac{(v^1)^b}{b!}, && P^4_{4,b}=\frac{(v^1)^{b+1}}{(b+1)!}.
\end{align*}
We see that the functions $v^\alpha$ satisfy the system of evolutionary PDEs with one spatial variable:
\begin{gather}\label{eq:principal hierarchy}
\frac{\d v^\alpha}{\d t^\beta_b}=\d_x P^\alpha_{\beta,p},\quad 1\le\alpha,\beta\le 4,\quad b\ge 0,
\end{gather}
where we identify $\d_x$ with $\frac{\d}{\d t^1_0}$. 

\medskip

Let 
$$
w^\alpha:=\eta^{\alpha\mu}\frac{\d^2\mcF}{\d t^\mu_0\d t^1_0}.
$$
The above considerations together with Theorem~\ref{main theorem} imply the following result.

\medskip

\begin{theorem}\label{second theorem}
The functions $w^\alpha$ satisfy the system of evolutionary PDEs with one spatial variable that is obtained from the system~\eqref{eq:principal hierarchy} by the Miura transformation
$$
v^\alpha\mapsto w^\alpha=v^\alpha+\sum_{g\ge 1}\eps^{2g}\eta^{\alpha\mu}\frac{\d^2}{\d t^\mu_0\d t^1_0}\left(\sum_{\lambda\in\mcP_{2g-2}}\frac{\prod_{i=1}^{l(\lambda)}v^4_{\lambda_i}}{\prod_{j\ge 1}m_j(\lambda)!}C_\lambda(q e^{v^4})-\frac{v^4}{24}\delta_{g,1}\right).
$$
\end{theorem}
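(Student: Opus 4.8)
The plan is to read off the claim directly from the definitions of $v^\alpha$ and $w^\alpha$ together with Theorem~\ref{main theorem}, without any new hard analysis. The key point is that both $v^\alpha$ and $w^\alpha$ are obtained from the genus-expansion of the same potential $\mcF$ by applying the same linear differential operator $\eta^{\alpha\mu}\frac{\d^2}{\d t^\mu_0\d t^1_0}$. Since $\mcF=\sum_{g\ge 0}\eps^{2g}\mcF_g$, and the genus-$0$ part yields exactly $v^\alpha$ by definition, we have
$$
w^\alpha=\eta^{\alpha\mu}\frac{\d^2\mcF}{\d t^\mu_0\d t^1_0}=v^\alpha+\sum_{g\ge 1}\eps^{2g}\eta^{\alpha\mu}\frac{\d^2\mcF_g}{\d t^\mu_0\d t^1_0}.
$$
Substituting the explicit formula~\eqref{eq:main formula} for $\mcF_g$, $g\ge 1$, into the higher-genus terms produces precisely the Miura transformation written in the statement. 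So the displayed change of variables is, tautologically, the relation between $w^\alpha$ and $v^\alpha$; the only substantive content is that this change of variables conjugates the flows.

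Next I would verify that the flows transform correctly. The functions $v^\alpha$ satisfy the system~\eqref{eq:principal hierarchy}, i.e. $\frac{\d v^\alpha}{\d t^\beta_b}=\d_x P^\alpha_{\beta,b}$, which is the \emph{principal hierarchy} of the genus-$0$ theory. I would then invoke the standard fact that the full potential $\mcF$ defines, via $w^\alpha=\eta^{\alpha\mu}\frac{\d^2\mcF}{\d t^\mu_0\d t^1_0}$, a deformation of this dispersionless system, and that the time evolution of $w^\alpha$ is governed by the second derivatives $\frac{\d^2\mcF}{\d t^\mu_0\d t^\beta_b}$. Concretely, differentiating the definition of $w^\alpha$ and using $\frac{\d}{\d t^\beta_b}=\frac{\d}{\d t^\beta_b}$ commuting with $\frac{\d}{\d t^\mu_0}$ and $\frac{\d}{\d t^1_0}=\d_x$, one gets $\frac{\d w^\alpha}{\d t^\beta_b}=\d_x\left(\eta^{\alpha\mu}\frac{\d^2\mcF}{\d t^\mu_0\d t^\beta_b}\right)$. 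By construction the Miura transformation is exactly the substitution that carries the genus-$0$ densities $P^\alpha_{\beta,b}$ to the full densities, so the system satisfied by the $w^\alpha$ is the image of~\eqref{eq:principal hierarchy} under this substitution. This is the precise sense in which ``the system is obtained from~\eqref{eq:principal hierarchy} by the Miura transformation''.

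The main point requiring care, and the step I expect to be the real obstacle, is confirming that the map $v^\alpha\mapsto w^\alpha$ is a genuine \emph{Miura transformation}, i.e. that it is invertible as a formal power series in $\eps$ with the higher-genus corrections expressed as differential polynomials in the $v^\alpha$ and their $x$-derivatives. Invertibility in $\eps$ is immediate since the correction starts at order $\eps^2$ and is identity at order $\eps^0$. The more delicate requirement is that the coefficients $C_\lambda(qe^{v^4})$ and the jet variables $v^4_{\lambda_i}=\d_x^{\lambda_i}v^4$ appearing in~\eqref{eq:main formula}, after applying $\eta^{\alpha\mu}\frac{\d^2}{\d t^\mu_0\d t^1_0}$, are expressible as differential polynomials in the $v^\alpha$; this follows because $v^4_k=\d_x^k v^4$ and $q\frac{d}{dq}$ acts on $C_\lambda(qe^{v^4})$ producing again such series, so the result lies in the differential ring generated by the $v^\alpha$. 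Once this is checked, the theorem follows by assembling the two computations above, and I would present it as a short corollary-style argument citing Theorem~\ref{main theorem}, the topological recursion relations, and the standard reconstruction of the flows from $\mcF$.
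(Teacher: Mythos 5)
Your proposal is correct and follows essentially the same route as the paper, which simply asserts that the theorem is an immediate consequence of Theorem~\ref{main theorem} together with the preceding derivation of the principal hierarchy~\eqref{eq:principal hierarchy}: once $w^\alpha$ is written as a differential substitution in the $v^\alpha$ that is the identity modulo $\eps^2$, the chain rule shows that $w^\alpha$ satisfies the Miura-transformed system. One small caveat: your aside that the Miura map ``carries the genus-$0$ densities $P^\alpha_{\beta,b}$ to the full densities'' is not literally true (the full densities $\eta^{\alpha\mu}\frac{\d^2\mcF}{\d t^\mu_0\d t^\beta_b}$ acquire their own $\eps$-corrections and are not obtained from $P^\alpha_{\beta,b}$ by substitution), but nothing in your argument actually depends on that remark.
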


\medskip

\begin{remark}
The part of the system~\eqref{eq:principal hierarchy} given by the flows $\frac{\d}{\d t^\beta_b}$ with $\beta=1$ or $\beta=4$ can be restricted to the submanifold given by $v^2=v^3=0$. By~\cite[Proposition~10.1]{BDGR18}, the resulting system coincides with the DR hierarchy associated to the even part of the cohomological field theory corresponding to the elliptic curve. Therefore, Theorem~\ref{second theorem} proves the DR/DZ equivalence conjecture for the elliptic curve. 
\end{remark}

\medskip

{\appendix

\section{Stationary Gromov--Witten invariants of an elliptic curve}

In this section, in order to make the paper more self-contained, we recall the formula for the stationary Gromov--Witten invariants of an elliptic curve given in~\cite{OP06c}.

\medskip

Denote by 
$$
\<\tau_{d_1}(\gamma_{\alpha_1})\ldots\tau_{d_n}(\gamma_{\alpha_n})\>^{\bullet E}_d,\quad d_1,\ldots,d_n,d\ge 0,\quad 1\le\alpha_i\le 4,
$$
the Gromov--Witten invariants of an elliptic curve $E$ obtained by the integration over the moduli space of stable maps with possibly disconnected domains. For $n=0$ we have
$$
\sum_{d\ge 0}\<\>^E_d q^d=\sum_{n\ge 1}\frac{\sigma(n)}{n}q^n,\qquad \sum_{d\ge 0}\<\>^{\bullet E}_d q^d=\exp\left(\sum_{n\ge 1}\frac{\sigma(n)}{n}q^n\right)=\frac{1}{(q)_\infty},
$$
where $(q)_\infty:=\prod_{j\ge 1}(1-q^j)$. For $n\ge 1$, a relation between the connected and disconnected stationary Gromov--Witten invariants of $E$ is given by
\begin{gather}\label{eq:connected and disconnected invariants}
\sum_{d\ge 0}\<\prod_{i=1}^n\tau_{d_i}(\gamma_4)\>^{\bullet E}_d q^d=\frac{1}{(q)_\infty}\sum_{k=1}^n\frac{1}{k!}\sum_{\substack{I_1\sqcup\ldots\sqcup I_k=\{1,\ldots,n\}\\I_i\ne\emptyset}}\prod_{i=1}^k C_{\od_{I_i}}(q),
\end{gather}
where $\od_{I_i}$ denotes the $|I_i|$-tuple of integers composed of the integers $d_j$ with $j\in I_i$. 

\medskip

A useful convention is to formally set
$$
\<\tau_{-2}(\gamma_4)^l\prod_{i=1}^n\tau_{d_i}(\gamma_4)\>^{\bullet E}_d:=\<\prod_{i=1}^n\tau_{d_i}(\gamma_4)\>^{\bullet E}_d,\quad d_1,\ldots,d_n,d,l\ge 0,
$$ 
in the disconnected case; and
$$
\<\tau_{-2}(\gamma_4)\>^E_0:=1
$$
in the connected case (therefore, $C_{(-2)}(q)=1$). Then formula~\eqref{eq:connected and disconnected invariants} remains true.

\medskip

For any $n\ge 0$, consider the following $n$-point function:
$$
F_E(z_1,\ldots,z_n):=\sum_{d\ge 0}q^d\sum_{d_1,\ldots,d_n\in\{-2\}\cup\mbZ_{\ge 0}}\<\prod_{i=1}^n\tau_{d_i}(\gamma_4)\>^{\bullet E}_d\prod_{i=1}^n z_i^{d_i+1}\in(z_1\cdots z_n)^{-1}\mbC[[z_1,\ldots,z_n,q]],
$$
and let 
$$
\vartheta(z):=\sum_{n\ge 0}(-1)^n q^{\frac{n(n+1)}{2}}\left(e^{\left(n+\frac{1}{2}\right)z}-e^{-\left(n+\frac{1}{2}\right)z}\right).
$$
In~\cite{OP06c} the authors proved that	
\begin{gather}\label{eq:formula for disconnected stationary}
F_E(z_1,\ldots,z_n)=\frac{1}{(q)_\infty}\sum_{\substack{\text{all $n!$ permutations}\\\text{of $z_1,\ldots,z_n$}}}\frac{\det\left[
\frac{\vartheta^{(j-i+1)}(z_1+\cdots+z_{n-j})}{(j-i+1)!}\right]_{1\le i,j\le n}}{\vartheta(z_1)\vartheta(z_1+z_2)\cdots\vartheta(z_1+\cdots+z_n)},
\end{gather}
where $\vartheta^{(k)}:=\frac{\d^k\vartheta}{\d z^k}$ for $k\ge 0$. For $k<0$, the convention $\frac{1}{k!}:=0$ is followed. So negative derivatives of $\vartheta(z)$ don't appear in formula~\eqref{eq:formula for disconnected stationary}.

\medskip

As it is explained in~\cite{OP06c}, formula~\eqref{eq:formula for disconnected stationary} implies that for any $n$-tuple $\od\in\mbZ_{\ge 0}^n$ we have
$$
C_{\od}(q)\in\mbQ[E_2,E_4,E_6]_{\sum (d_i+2)},
$$ 
where $\mbQ[E_2,E_4,E_6]$ is the ring freely generated by the Eisenstein series $E_2,E_4,E_6$, and the lower index specifies the homogeneous component of weight $\sum (d_i+2)$, where the weight of~$E_k$ is equal to $k$.
}

\end{document}